\newtheorem{theorem}{Theorem}[section]
\newtheorem{definition}[theorem]{Definition}
\newtheorem{proposition}[theorem]{Proposition}
\newtheorem{corollary}[theorem]{Corollary}
\title{A note on a short proof of the parallelizability of orientable $3$-manifolds}
\author{Dionne Ibarra}
\address{School of Mathematics, 9 Rainforest Walk, Floor 4, Monash University, VIC 3800, Australia}
\email{dionne.ibarra@monash.edu}
\keywords{knots, links, manifolds, integral surgery, parallelizable manifold, stably parallelizable manifold.}
\subjclass[2020]{57K10 (Primary), 57R25 (Secondary)}
\begin{document}

\begin{abstract}
We survey, complete, and modify a proof, involving knot theory, of Stiefel's theorem that all orientable $3$-manifolds are parallelizable. The completion of the proof is done by using the relationship between the tangent bundle and normal bundle of manifolds with non-trivial boundary and on stably parallelizable and parallelizable manifolds. We end with a remark on $7$-manifolds and present J. Korba\v{s}' example of a non-parallelizable $7$-manifold.
\end{abstract} 

\maketitle

\tableofcontents

 \section*{Acknowledgements}

This work was supported by the Australian Research Council grant DP210103136.
\section{Introduction}
Different approaches to the proof of Stiefel's theorem, all $3$-manifolds are parallelizable, have been done, see \cite{BZ, BL, DGGK, Gon, Gei, Gon, Kir2, FoMa, Whi}.
This survey focuses on the connections between knot theory and the parallelizability of $3$-manifolds discussed in \cite{FoMa}. A. T. Fomenko and S. V. Matveev in \cite{FoMa} used knot theory, in particular integral surgery, to prove that all $3$-manifolds are stably parallelizable.  R. Benedetti and P. Lisca in \cite{BL} used A. T. Fomenko and S. V. Matveev's proof and quasi-framings to give a proof of Stiefel's theorem. Furthermore, S. Durst, H. Geiges, J. Gonzalo Pérez, and M. Kegel remarked in \cite{DGGK} that quasi-framings can be replaced by defining a quaternionic structure on the tangent
bundle of the parallelizable $4$-manifold bounding the $3$-manifold.  In this note we will give a different approach to \cite{BL} and \cite{DGGK} by replacing quasi-framings with the span of a manifold and slightly altering the proof in \cite{FoMa} by giving a different doubling argument when considering a compact $3$-manifold with non-trivial boundary. 
Throughout this paper all manifolds are assumed to be connected compact $3$-manifolds unless otherwise stated. 

In Section \ref{section:integralsurgery}, we will give a brief description of Dehn surgery and integral surgery then list major results from W. B. R. Lickorish \cite{Lic1}, A. H. Wallace \cite{Wal}, R. Thom \cite{Thom}, R, Kirby \cite{Kir}, and J. Milnor \cite{Mil} that connect integral surgery to $3$-manifolds and parallelizable $4$-manifolds. We end the section by discussing how A. T. Fomenko and S. V. Matveev in \cite{FoMa} use these results as a foundation to prove that all closed orientable $3$-manifolds are stably parallelizable. In Section \ref{section:parallelizable}, we turn our attention to tangent bundles and the span of a manifold. We then discuss a result by D. Husemoller in \cite{Hus} on the relationship between the tangent bundle of a manifold and it's boundary. We end the section by extracting a corollary from the proposition. In Section \ref{shortproof}, we present a short proof of Stiefel's theorem that all orientable $3$-manifolds are parallelizable. In Section \ref{7manifolds} we make a remark on $7$-manifolds and present J. Korba\v{s}' example of a non-parallelizable $7$-manifold.

\section{Integral surgery}\label{section:integralsurgery}

Dehn surgery is a well-studied technique that uses knot theory to understand $3$-manifolds. A special case of this technique was introduced in 1910 by M. Dehn \cite{Deh} it was then generalized in the work of R. H. Bing in the late 1950's \cite{Bin1, Bin2}. In this section we will give an account of the various important theorems that provide a clear picture of the connections between knot theory, 3-manifolds, and parallelizable $4$-manifolds.

\begin{definition}
\label{Dehnsurgery}
\textbf{Dehn surgery} along a knot $K$ in an orientable 3-manifold, $M$, is the process of taking the tubular neighborhood of the underlying unframed knot $ K$, $U( K)$, which is homeomorphic to $D^2 \times S^1$, and removing it from $M$ so that we obtain two manifolds: $X_K = M \backslash int( U( K))$ and $V = U( K) \cong D^2 \times S^1$. We use an orientation reversing homeomorphism $\varphi: \partial V \to \partial X_K$ determined by a curve, $c$, in $\partial V$ to add $V$ back into $X_K$, that is, adding a $2$-handle  along $c$ and then cap it off with a 3-ball, see Figure \ref{attachingmap}. The result $X_K \cup_{\varphi} V$ is a closed orientable manifold, we say this manifold is obtained by Dehn surgery along the knot from $M$. We may define Dehn surgery along a link by applying the process to each component where for each component there is an accompanying curve on the tubular neighborhood of the component.
\end{definition}

\begin{figure}[ht]
$$\vcenter{\hbox{
\begin{overpic}[scale = .4]{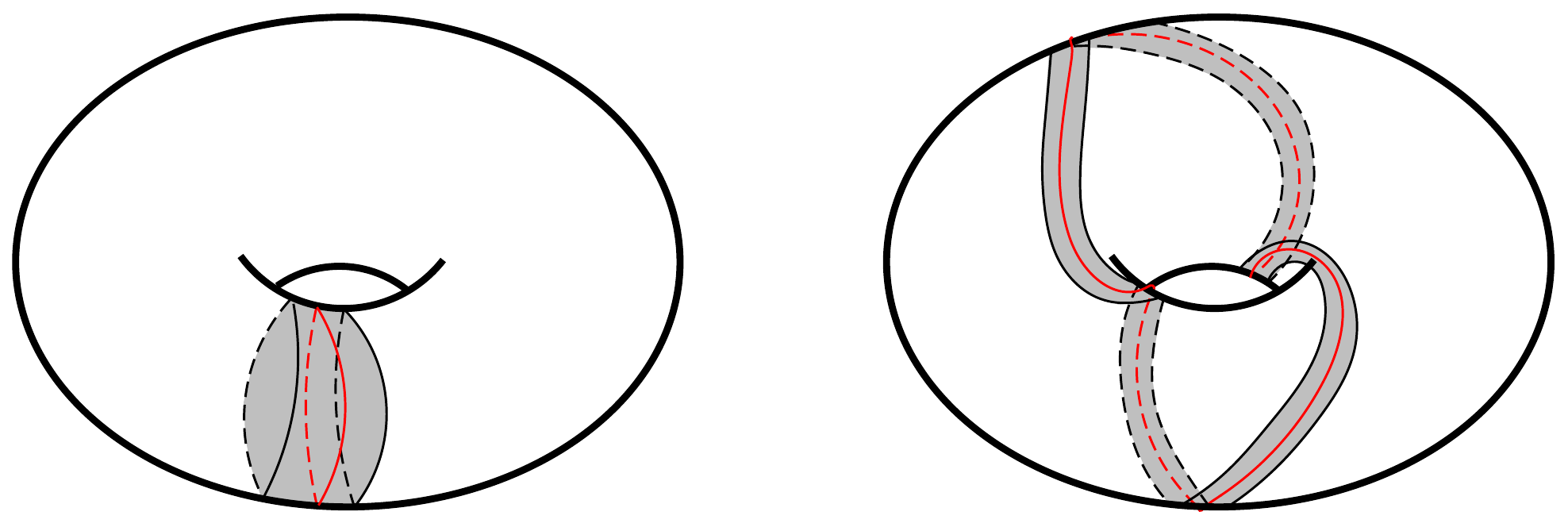}
\put(41, -6){$\mu$}
\put(200, 20){$c$}
\put(107, 35){$\xrightarrow{\varphi}$}
\end{overpic} }}$$
\caption{An illustration of attaching a $2$-handle from $V = (2-handle) \cup D^3$ along $c = \varphi(\mu)$ before capping it off with $D^3$.} \label{attachingmap}
\end{figure}

\begin{definition}
 For $M = S^3$, \textbf{Integral surgery} along a framed knot $\pmb K$ is Dehn surgery along $\pmb K$ given by a homeomorphism determined by the framing of $\pmb K$.  In this case we will denoted  by $M_{\pmb K}$ the manifold obtained by integral  surgery on the framed knot $\pmb K$ from $S^3$. We may define integral surgery along a framed link by applying the process to each component. The manifold obtained by integral surgery along a framed link from $S^3$ is denoted by $M_{ \pmb L }$.
 \end{definition}

A relationship between all closed orientable $3$-manifolds and framed links in $S^3$ was first proved by A. H. Wallace in \cite{Wal}, then W. B. R. Lickorish in \cite{Lic1} gave a separate elementary and geometric proof that uses Dehn twist homeomorphisms.

\begin{theorem}[Lickorish-Wallace Theorem] \label{LWT} \cite{Lic1,Wal}
 Every closed orientable $3$-manifold can be obtained from $S^3$ by performing integral surgery on a framed link.
\end{theorem}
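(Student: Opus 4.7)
The plan is to follow Lickorish's approach via Heegaard splittings, which is the most direct route making the knot-theoretic content visible. Let $M$ be a closed orientable $3$-manifold. After sufficient stabilization, $M$ admits a Heegaard splitting $M = H_g \cup_\varphi H_g$ of some genus $g$, where $H_g$ is a genus-$g$ handlebody and $\varphi \colon \Sigma_g \to \Sigma_g$ is an orientation-reversing diffeomorphism of the Heegaard surface $\Sigma_g = \partial H_g$. The standard $3$-sphere admits an analogous genus-$g$ Heegaard splitting $S^3 = H_g \cup_{\varphi_0} H_g$, so $M$ and $S^3$ differ only in their gluing map, encoded by the orientation-preserving diffeomorphism $\psi = \varphi_0^{-1} \circ \varphi$ of $\Sigma_g$.

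Next I would invoke the Dehn-Lickorish theorem: the mapping class group of a closed orientable surface is generated by Dehn twists along simple closed curves. Writing
\[
\psi \simeq T_{c_1}^{\epsilon_1} \circ \cdots \circ T_{c_n}^{\epsilon_n}, \qquad c_i \subset \Sigma_g,\ \epsilon_i \in \{\pm 1\},
\]
reduces everything to the observation that a single Dehn twist can be realized as an integer surgery. Given $c \subset \Sigma_g$, push it slightly into $\operatorname{int}(H_g)$ to obtain a knot $\tilde c \subset S^3$; the surface $\Sigma_g$ equips $\tilde c$ with a natural framing, which, since $\tilde c \subset S^3$, is automatically integer-valued. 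A direct computation then shows that modifying the gluing map of $S^3 = H_g \cup_{\varphi_0} H_g$ by $T_c^{\epsilon}$ has the same effect as $(-\epsilon)$-surgery on $\tilde c$ with respect to this surface framing. Iterating yields $M$ as integral surgery on the framed link $\tilde L = \tilde c_1 \cup \cdots \cup \tilde c_n \subset S^3$.

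The main obstacle is the Dehn-Lickorish generation theorem itself: the remaining ingredients (existence and stabilization of Heegaard splittings, and the identification of Dehn twists with surgeries on pushed-off parallel copies) are comparatively mechanical. An alternative route, due to Wallace, bypasses surfaces entirely: using Thom's vanishing $\Omega_3^{SO} = 0$ one produces a compact orientable $4$-manifold $W$ with $\partial W = M$, and then eliminates the $1$- and $3$-handles of $W$ through handle-trading to obtain a $2$-handlebody $W'$ with the same boundary; the attaching circles of the $2$-handles of $W'$ form the desired framed link in $\partial D^4 = S^3$. In that route the hard part is the handle cancellation, requiring care to preserve $\partial W = M$ while tracking framings.
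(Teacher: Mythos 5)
The paper does not actually prove Theorem \ref{LWT}: it is quoted with citations, and your Heegaard-splitting/Dehn-twist argument is precisely Lickorish's proof that the paper points to (with Wallace's handle-theoretic construction as your stated alternative), so you are taking essentially the same route as the cited sources, and the outline is correct. The only minor points: no stabilization is needed to obtain a splitting of $M$ (every closed orientable $3$-manifold has one, and $S^3$ admits a standard splitting in every genus, so the genera can always be matched), and when iterating over the twists $T_{c_1},\dots,T_{c_n}$ the pushed-in copies $\tilde c_i$ must be placed at distinct collar levels of the Heegaard surface so that they form an honest link on which the surgeries can be performed simultaneously.
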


By using R. Thom's result in \cite{Thom} we observe Dehn surgery's strong connections to $4$-manifolds.
\begin{theorem}\label{4man}\cite{Thom}
Every closed, connected, $3$-manifold is the boundary of some oriented connected $4$-manifold.
\end{theorem}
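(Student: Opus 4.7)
The plan is to reduce the statement (in the orientable setting treated by the paper) to the Lickorish--Wallace Theorem (Theorem \ref{LWT}) together with the standard $2$-handle attachment construction, thereby producing the bounding $4$-manifold explicitly rather than appealing to Thom's cobordism machinery.

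Let $M$ be a closed connected orientable $3$-manifold. By Theorem \ref{LWT}, there exists a framed link $\pmb{L} = \pmb{K}_1 \cup \cdots \cup \pmb{K}_k \subset S^3$ with $M \cong S^3_{\pmb{L}}$. First I would set $W_0 = B^4$, which is an oriented connected $4$-manifold with $\partial W_0 = S^3$. Inductively, having defined $W_{i-1}$ with $\pmb{K}_i$ sitting in a portion of $\partial W_{i-1}$ untouched by earlier attachments, I would form $W_i$ by attaching a $2$-handle $h_i = D^2 \times D^2$ to $W_{i-1}$ along a tubular neighborhood of $K_i$ via an embedding $\varphi_i: S^1 \times D^2 \hookrightarrow \partial W_{i-1}$ with $\varphi_i(S^1 \times \{0\}) = K_i$ and with $\varphi_i(S^1 \times \{1\})$ realizing the framing prescribed on $\pmb{K}_i$. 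Set $W := W_k$.

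The key step is the identification $\partial W \cong M$. Attaching such a $2$-handle changes the boundary in the following way: a tubular neighborhood $S^1 \times D^2$ of $K_i$ is removed from $\partial W_{i-1}$ and replaced by $D^2 \times S^1$, with the meridian of the new solid torus identified with the framing curve. This is precisely the integral surgery operation described in Definition \ref{Dehnsurgery}. Performing this for each component of $\pmb{L}$ in turn gives $\partial W \cong S^3_{\pmb{L}} \cong M$. The manifold $W$ inherits an orientation from $B^4$ because the $2$-handles admit orientation-coherent attaching maps, and $W$ remains connected at each stage because the attaching region $S^1 \times D^2$ is nonempty and connected.

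The main technical point requiring care is this identification of the boundary of the $2$-handle attachment with the result of integral surgery: it is standard handle calculus, but one must carefully track the framing conventions (relating the framing integer $n_i$ to the attaching curve $c$ on $\partial V$ in Definition \ref{Dehnsurgery}) and the orientation-reversing gluing homeomorphism $\varphi$ used there. Once this identification is made, the theorem follows immediately from the Lickorish--Wallace Theorem.
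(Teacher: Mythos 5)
The paper gives no argument for this statement: it is imported wholesale from Thom, whose proof is cobordism-theoretic (the vanishing of the oriented cobordism group $\Omega_3$, obtained via the Pontryagin--Thom construction and characteristic-number computations). Your route is genuinely different and is correct in the orientable setting: Theorem \ref{LWT} plus the standard fact that attaching a $4$-dimensional $2$-handle to $B^4$ along a framed knot changes the boundary by exactly the corresponding integral surgery yields an explicit oriented, connected (indeed simply connected) handlebody $W$ with $\partial W \cong M$; this is precisely the mechanism the paper itself exploits two paragraphs later, where Theorems \ref{evensurgerythrm} and \ref{evenlinkthrm} are combined to get the sharper conclusion that $M$ bounds a \emph{parallelizable} $4$-manifold. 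What each approach buys: Thom's argument is far more general (it computes cobordism groups in all dimensions and also gives the unoriented statement), while yours is elementary and constructive, producing a concrete handle decomposition with one $0$-handle and only $2$-handles. Two caveats you should make explicit. First, to avoid circularity you must use Lickorish's Dehn-twist proof of Theorem \ref{LWT}; Wallace's original argument goes through cobordism considerations close to Thom's theorem itself. Second, your proof covers only orientable $3$-manifolds, whereas the statement as printed says ``every closed, connected $3$-manifold''; since the boundary of an oriented $4$-manifold is necessarily orientable, the statement is only sensible under the paper's standing orientability convention (and that is how it is used), so your restriction is harmless --- but the genuinely non-orientable version of Thom's result (every closed $3$-manifold bounds some compact $4$-manifold) is not reachable by your method, because Lickorish--Wallace only produces orientable manifolds.
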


We turn our attention to \textbf{even surgery} which is integral surgery in $S^3$ along a framed link whose components all have an even framing number. In R. Kirby's paper on Kirby calculus \cite{Kir}, Kirby found a relationship between even surgery and parallelizable $4$-manifolds. Recall that a $4$-manifold is parallelizable if its tangent bundle is trivial. 

\begin{theorem} \label{evenlinkthrm} \cite{Kir}
Let $\pmb L$ be an even link, then $W_{\pmb L}$ is parallelizable, where $W_{\pmb L}$ is a $4$-manifold determined by a framed link $\pmb L$, by adding $2$-handles to a $4$-ball along $\pmb L$.
\end{theorem}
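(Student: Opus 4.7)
My plan is to use Kirby's handle-by-handle obstruction argument. By construction, $W_{\pmb L}$ is built from a single $4$-ball (the $0$-handle) by attaching one $2$-handle $h_i \cong D^2 \times D^2$ along each component $L_i$ of $\pmb L$, with attaching map determined by the framing $n_i$. I would start by fixing the standard trivialization of $TD^4$ on the $0$-handle and then try to extend it across the $2$-handles one at a time.

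Since $D^2 \times D^2$ is contractible, its tangent bundle is trivializable in many ways; the question for each handle $h_i$ is whether the trivialization already specified on the attaching region $S^1 \times D^2 \subset \partial D^4$ extends across $h_i$. Because $h_i$ is obtained from $S^1 \times D^2$ by attaching a single $2$-cell, the obstruction to such an extension lies in $\pi_1(SO(4)) \cong \mathbb{Z}/2$. The heart of the argument is to show that this obstruction is precisely $n_i \pmod 2$. To do this I would compare two trivializations of $TW_{\pmb L}$ along a tubular neighborhood of $L_i$: the ``Seifert'' trivialization coming from the parallelization of $D^4$ (which over $L_i \subset S^3$ is realized by a Seifert surface, hence is the $0$-framing), and the trivialization prescribed by the attached $2$-handle (which is the $n_i$-framing). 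These differ by $n_i$ full twists of the normal $2$-plane field, so the clutching loop $S^1 \to SO(4)$ representing the obstruction has mod-$2$ degree equal to $n_i$.

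If all $n_i$ are even, every obstruction vanishes and the trivialization extends handle-by-handle to all of $W_{\pmb L}$, proving parallelizability. The main technical step is the identification of the $\pi_1(SO(4))$-obstruction with $n_i \bmod 2$; everything else is routine obstruction theory on a $2$-complex. As a sanity check, one can verify the conclusion cohomologically: $W_{\pmb L}$ is simply connected and deformation retracts onto a $2$-complex, so by obstruction theory $TW_{\pmb L}$ is trivial iff $w_1 = w_2 = 0$. Orientability of $W_{\pmb L}$ gives $w_1 = 0$, and in the basis of $H_2(W_{\pmb L};\mathbb{Z})$ given by the core spheres (cores of the $2$-handles capped off by Seifert surfaces), the intersection form is the linking matrix of $\pmb L$ with diagonal entries $n_i$; this form is even exactly when each $n_i$ is even, which forces $0$ to be a characteristic element and hence $w_2 = 0$.
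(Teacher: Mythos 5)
The paper does not actually prove this statement: it is quoted as Kirby's result \cite{Kir} and used as a black box, so there is no internal proof to compare against. Your argument is essentially the standard (Kirby-style) proof and is correct. Two remarks. First, as you yourself flag, the crux of the handle-by-handle version is the identification of the $\pi_1(SO(4))\cong\mathbb{Z}/2$ obstruction for the $i$-th handle with $n_i \bmod 2$; the sentence ``these differ by $n_i$ full twists of the normal $2$-plane field'' is slightly too quick, because the comparison loop between the $D^4$-trivialization and the handle trivialization a priori also picks up a contribution from the rotation of the tangent direction of $L_i$ and the $S^3$-normal, so one must know that for the $0$-framing the comparison loop is null-homotopic. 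That is exactly what the pushed-in Seifert surface (equivalently the Wu-type computation $\langle w_2,[\Sigma_i]\rangle=[\Sigma_i]\cdot[\Sigma_i] \bmod 2$) gives you, so your ``sanity check'' is not merely a check: it is the cleanest complete proof. Second, that cohomological argument is airtight as you state it: $W_{\pmb L}$ is simply connected and retracts onto a $2$-complex, an orientable rank-$4$ bundle over a $2$-complex is trivial iff $w_2=0$, $H^2(W_{\pmb L};\mathbb{Z}/2)\cong \mathrm{Hom}(H_2(W_{\pmb L};\mathbb{Z}),\mathbb{Z}/2)$ since $H_1=0$, the capped cores generate $H_2$ with self-intersections $n_i$, and evenness of all $n_i$ kills $w_2$. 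So your proposal would serve as a legitimate proof of Theorem \ref{evenlinkthrm}, which the paper itself only cites; if you keep the handle-by-handle narrative, make the Seifert-surface/self-intersection step explicit rather than an aside.
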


This relationship becomes more powerful after applying J. Milnor's work in \cite{Mil}; in particular the following theorem. (A simplified proof of Theorem \ref{evensurgerythrm} than that of \cite{Mil} can be found in \cite{Kap}, \cite{Sav}, and \cite{FoMa}).

\begin{theorem}\cite{Mil}\label{evensurgerythrm}
Every closed orientable $3$-manifold can be obtained from $S^3$ by performing integral surgery along an even link.
\end{theorem}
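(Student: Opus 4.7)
The plan is to start from a framed link realizing $M$ via Lickorish-Wallace (Theorem \ref{LWT}) and to modify it by Kirby moves — which preserve the underlying $3$-manifold — until every framing coefficient is even. Write $M = M_{\pmb L}$ with $\pmb L = K_1 \cup \cdots \cup K_n$, framings $n_1, \ldots, n_n$, and symmetric linking matrix $A = (a_{ij})$ where $a_{ii} = n_i$. The target is a framed link whose linking matrix has entirely even diagonal.

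I would first examine the arithmetic effect of the two Kirby moves. A handle slide of $K_i$ over $K_j$ replaces $A$ by $P^T A P$ for an elementary matrix $P = I + E_{ji}$, so modulo $2$ the diagonal entry at position $i$ becomes $n_i + n_j$ and the off-diagonal entries in row (and column) $i$ become $a_{ik} + a_{jk}$. A blow-up (adding a split $\pm 1$-framed unknot) adjoins a unit $1 \times 1$ block on the diagonal. Hence, modulo $2$, handle slides act by symmetric $\mathbb{F}_2$-congruences on $A$, and blow-ups allow free adjunction of $1$'s on the diagonal.

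The key input is that a \emph{characteristic sublink}, i.e.\ a subset $C \subseteq \{1,\ldots,n\}$ satisfying $\sum_{j \in C} a_{ij} \equiv a_{ii} \pmod{2}$ for every $i$, always exists. Indeed, for any symmetric matrix $A$ over $\mathbb{F}_2$ one has $y^T A y = \sum_i y_i a_{ii} \pmod{2}$, so $\mathrm{diag}(A)$ annihilates $\ker(A \bmod 2)$ and thus lies in $\mathrm{im}(A \bmod 2)$. Topologically, this reflects the fact that every closed orientable $3$-manifold is spin (since $w_1 = 0$ and $w_2 = 0$ by Wu's formula). Using $C$, I would slide each $K_i$ with $n_i$ odd successively over the components of $C$; the new framing parity becomes $n_i + \sum_{j \in C} a_{ij} \equiv n_i + n_i \equiv 0 \pmod{2}$, as desired.

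The main obstacle is the bookkeeping: each slide also alters subsequent off-diagonal entries of $A$, so one must verify that the parities can be cleared simultaneously, possibly after introducing auxiliary $\pm 1$-framed unknots via blow-ups and later sliding them in. A conceptually cleaner alternative, closer in spirit to Milnor's \cite{Mil} argument, is to use Theorem \ref{4man} to find a compact oriented $4$-manifold $W$ bounding $M$, extend the spin structure of $M$ over a simply-connected spin $4$-manifold $W'$ obtained from $W$ by surgeries on embedded circles and spheres, and then represent $W'$ by a handle decomposition with only $0$- and $2$-handles; the attaching link is then automatically even, since Wu's formula forces the intersection form of a spin $4$-manifold to be even.
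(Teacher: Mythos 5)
The paper itself does not prove Theorem \ref{evensurgerythrm}; it quotes it from Milnor and points to \cite{Kap}, \cite{Sav}, \cite{FoMa} for simplified proofs, so your proposal has to be measured against that standard argument, whose framework (Lickorish--Wallace, characteristic sublinks, Kirby moves) you have correctly identified; your $\mathbb{F}_2$-linear-algebra proof that a characteristic sublink $C$ exists is also correct. The genuine gap is in the step that is supposed to clear the odd framings. Sliding $K_i$ over $K_j$ replaces the framing by $n_i + n_j \pm 2\,\mathrm{lk}(K_i,K_j)$, so modulo $2$ the change is $a_{jj}$, the framing of the component slid over --- exactly as your own second paragraph records ($n_i \mapsto n_i+n_j$) --- and not $a_{ij}$. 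Hence sliding $K_i$ over all components of $C$ changes its parity by $\sum_{j\in C} a_{jj}$, the characteristic identity $\sum_{j\in C} a_{ij}\equiv a_{ii} \pmod{2}$ never enters, and the claimed cancellation $n_i+n_i\equiv 0$ does not occur. The simplest case already defeats the recipe: for $+1$-surgery on the trefoil (the Poincar\'e sphere) the link has one component, $C$ is that component, there is nothing to slide over, and no sequence of slides among existing components can change the parity; introducing a split $\pm 1$-framed unknot and sliding over it does change the parity, but that unknot now has odd framing and links $K_i$, and blowing it down changes the framing of $K_i$ by $\mp\mathrm{lk}^2$, undoing the gain. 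So what you call ``bookkeeping'' is in fact the heart of the theorem.

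The way the characteristic sublink is actually used (Kaplan's algorithm, reproduced in \cite{Sav} and \cite{FoMa}) is at the blow-down rather than at the slides: one first uses handle slides (which preserve the property of being characteristic) and blow-ups to reduce $C$ to a single unknotted component $U$ with framing $\pm 1$, and then blows $U$ down; every remaining framing changes by $\mp\,\mathrm{lk}(K_i,U)^2 \equiv \mathrm{lk}(K_i,U) \equiv a_{ii} \pmod{2}$ by the characteristic condition, so all diagonal entries become even simultaneously, and one iterates if needed. Your alternative closing sketch --- bound $M$ by a simply-connected spin $4$-manifold admitting a handle decomposition with only $0$- and $2$-handles, so that the attaching link is even because a spin intersection form is even --- is indeed the Milnor/Kaplan-style route and is sound in outline, but as written it also defers the substantive steps (extending the spin structure after modifying $W$, and trading away $1$- and $3$-handles while keeping the boundary and the spin condition), so it does not yet close the gap either.
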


By combining Theorem \ref{evensurgerythrm} and Theorem \ref{evenlinkthrm} we have that every closed orientable $3$-manifold is the boundary of a parallelizable $4$-manifold. This result is the foundation of Fomenko and Matveev's proof of Stiefel's theorem. However, Fomenko and Matveev refer to Husemoller's book \cite{Hus} for the completion of the proof. This leads us to the next section about tangent bundles, stably parallelizable, and parallelizable manifolds for the completion of Fomenko and Matveev's proof and a short proof of Stiefel's theorem.

\section{Stably parallelizable and parallelizable manifolds}\label{section:parallelizable}

Stably parallelizable manifolds, also known as $\pi$-manifolds, were extensively studied in the 1960's and a summary of the findings were presented by E. Thomas in \cite{Th2}. In this section we will use a few key theorems on the tangent bundle of manifolds with boundary and stably parallelizable manifolds that serves vital to the short proof of Stiefel's theorem.

\begin{definition}
A section of the tangent bundle $TM \xrightarrow{\ \pi \ } M$ is called a \textbf{vector field} on $M$. More precisely, a vector field is a smooth function $\nu: M \to TM$ which assigns to each point $p \in M$ a vector $\nu(p)$ tangent to $M$ at $p$ such that $\pi \circ \nu = \text{id}_{M}$.
\end{definition}

In 1966, E. Thomas coined the term ``span" of a manifold in \cite{Th1} while working on stably equivalent vector bundles. 
\begin{definition}
The vector fields $\nu_1, \cdots, \nu_k$ on an $n$-manifold $M$ are called \textbf{linearly independent} if for every $p\in M$ the vectors $\{ \nu_1(p), \nu_2(p), \cdots, \nu_k(p)\}$ are linearly independent.
The maximal number of linearly independent vector fields on $M$ is called the \textbf{span} of $M$ and is denoted by $span(M)$. Clearly, $span(M) \leq n$.
\end{definition}

\begin{definition}
$M$ is \textbf{parallelizable} if the tangent bundle of $M$ is trivial. Equivalently, an $n$-manifold $M$ is parallelizable if and only if $span (M) = n$.
\end{definition}

\begin{definition}
Let $E_1 \xrightarrow{\ \pi_1 \ } M$ and $E_2 \xrightarrow{\ \pi_2 \ } M$ be two vector bundles. We define the \textbf{Whitney sum bundle} (direct sum bundle) to be the vector bundle $E_1 \oplus E_2 \xrightarrow{\ \pi \ } M$  where $E_1 \oplus E_2 = \{ (v_1, v_2) \in E_1 \times E_2 ; \pi_1(v_1)=\pi_2(v_2) \}$ and $\pi(v_1, v_2) = \pi_1(v_1) = \pi_2(v_2)$. For an arbitrary $p \in M$, the fiber at $p$ is $\pi^{-1}( \{p \}) = \pi_1^{-1}( \{ p \}) \oplus \pi_2^{-1}( \{ p \} )$.
\end{definition}

\begin{definition}
Let $M$ be an $n$-manifold, $TM \xrightarrow{\ \pi_1 \ } M$ the tangent bundle over $M$, and $M \times \mathbb{R} \xrightarrow{\ \pi_2 \ } M$ be a trivial line bundle over $M$, then $M$ is called \textbf{stably parallelizable} if the Whitney sum bundle  $TM \oplus (M \times \mathbb{R}) \xrightarrow{\ \pi \ } M$ is a trivial bundle.
\end{definition}

Consider the $n$-sphere smoothly embedded into $\mathbb{R}^{n+1}$ and defined as $S^n := \{ \vec{x} \in \mathbb{R}^{n+1} ; \vec{x} \cdot \vec{x} = 1 \}$. Then the \textbf{normal bundle} of $S^n$ into $\mathbb{R}^{n+1}$, denoted by $NS^n \xrightarrow{\ \pi \ } S^n$, is a vector bundle where the total space is defined by $NS^n = \{ (\vec{x}, t\vec{x}) \in S^n \times \mathbb{R}^{n+1} ;  t \in \mathbb{R}  \}$ with projection map defined by $\pi (\vec{x}, t\vec{x}) = \vec{x}$. The fiber of this bundle is $\mathbb{R}$ since for each point $\vec{p}\in S^n$, we have $\pi^{-1}(\{ \vec{p} \}) = \{(\vec{p}, t \vec{p}); t \in \mathbb{R} \}  \cong \mathbb{R}$. It can be shown that this bundle is bundle isomorphic to the trivial line bundle $S^n \times \mathbb{R}$. Furthermore, the proof that all $n$-spheres are stably parallelizable is usually shown by taking the Whitney sum of the tangent bundle $TS^n$ with the normal bundle $NS^n$ and proving that the resulting bundle is trivial. 

\begin{definition}
Let W be an $n$-manifold and $M$ a manifold embedded into $W$. Consider the tangent bundle of $W$, $TW \xrightarrow{\ \pi \ } W$, then the \textbf{ambient tangent bundle} over $M$, denoted by $TW \vert_M$, is a vector bundle with total space $TW \vert_M = \bigsqcup_{p \in M} T_pW$, the projection map obtained by restricting $\pi$ to $TW \vert_M$, and base space $M$.
\end{definition}

\begin{definition}
Let $W$ be an $n$-manifold and $M$ be a manifold embedded into $W$. The \textbf{normal bundle} of $M$ in $W$ is defined as the following quotient bundle,

$$ NM := \mathlarger{\sfrac{TW \vert_{M}}{ TM}}.$$
Notice that this yields the following bundle isomorphism,
\begin{equation}\label{normalbuniso}
    TW \vert_{M} \cong TM \oplus NM.
\end{equation}
\end{definition}

The following proposition was proved by D. Husemoller in \cite{Hus} by constructing a bundle isomorphism. A different approach to the proof can be done my using strategies in the proof that all $n$-spheres are stably parallelizable. That is, by using the bundle isomorphism in Equation \ref{normalbuniso} and proving that the normal bundle of $M$ is a trivial line bundle.

\begin{proposition}\cite{Hus}\label{boundarystab} Let $W$ be an $(n+1)$-dimensional manifold with boundary $\partial W \neq \varnothing$, and let $M$ be an $n$-manifold such that $M =\partial W$, then
there exists an isomorphism of vector bundles between the ambient tangent bundle over $M$ and the Whitney sum bundle of the tangent bundle of $M$ and a trivial line bundle. That is, 
$$ TW \vert_{M} \cong TM \oplus (M \times \mathbb{R}).$$
\end{proposition}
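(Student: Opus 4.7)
The plan is to use the hint that the author gives, namely to combine the general decomposition $TW|_M \cong TM \oplus NM$ of Equation~\ref{normalbuniso} with a direct proof that the normal line bundle $NM \to M$ is trivial. Since $M = \partial W$, triviality of $NM$ will come from the existence of a global inward-pointing vector field along the boundary, and the whole statement follows formally from these two ingredients.

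First I would invoke the collar neighborhood theorem to fix a smooth embedding $c \colon M \times [0,1) \hookrightarrow W$ with $c(p,0) = p$ for all $p \in M$. For each $p \in M$ define $\nu(p) \in T_p W$ to be the velocity at $t=0$ of the smooth curve $t \mapsto c(p,t)$. This $\nu$ is a smooth section of the ambient tangent bundle $TW|_M$, and by construction $\nu(p)$ is transverse to $M$ at every point (it points into $W$), so its image under the quotient map $TW|_M \to NM = TW|_M / TM$ is a nowhere-zero smooth section $\bar{\nu}$ of the line bundle $NM \to M$.

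Second, I would use the standard fact that a real line bundle admitting a nowhere-vanishing global section is trivial: the map $M \times \mathbb{R} \to NM$ sending $(p,t) \mapsto t\,\bar{\nu}(p)$ is a smooth bundle isomorphism covering the identity on $M$. Hence $NM \cong M \times \mathbb{R}$.

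Finally, chaining this with the bundle isomorphism of Equation~\ref{normalbuniso} gives
\[
 TW|_{M} \;\cong\; TM \oplus NM \;\cong\; TM \oplus (M \times \mathbb{R}),
\]
which is the statement of the proposition. The only step requiring genuine input (beyond the decomposition already granted) is the existence of the collar, and this is exactly where the boundary hypothesis $M = \partial W$ is used; without it, there is no canonical transverse direction and the normal bundle need not be trivial. So I expect the main obstacle to be a clean invocation of the collar neighborhood theorem and a careful check that the resulting $\bar{\nu}$ is nowhere zero in the quotient, the remainder of the argument being a formal application of elementary bundle theory.
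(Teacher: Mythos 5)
Your proof is correct and follows essentially the route the paper itself indicates: the paper cites Husemoller and sketches exactly this alternative argument, namely combining Equation~\ref{normalbuniso} with the triviality of the normal line bundle of $M=\partial W$. Your use of the collar neighborhood theorem to produce the nowhere-vanishing inward-pointing section of $NM$ supplies precisely the detail the paper leaves implicit, so the argument goes through as written.
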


A nice direct corollary to Proposition \ref{boundarystab} arises when the tangent bundle of a manifold with boundary is trivial. 

\begin{corollary}\label{boundarystable}
Let $W$ be an $n$-dimensional manifold with boundary $\partial W \neq \varnothing$. If the tangent bundle of $W$ is trivial then its boundary is stably parallelizable. 
\end{corollary}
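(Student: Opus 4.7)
The plan is to derive the corollary as a nearly immediate consequence of Proposition \ref{boundarystab} together with the hypothesis that $TW$ is trivial. Set $M = \partial W$; since $\dim W = n$, we have $\dim M = n-1$, so applying Proposition \ref{boundarystab} (with its ``$n+1$'' replaced by $n$) yields the bundle isomorphism $TW|_{M} \cong TM \oplus (M \times \mathbb{R})$.

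Next I would observe that the restriction of a trivial bundle to any subspace of its base is again trivial. Concretely, if $TW \cong W \times \mathbb{R}^n$ via some bundle isomorphism $\Phi$, then restricting $\Phi$ to fibers over $M \subset W$ gives $TW|_{M} \cong M \times \mathbb{R}^n$. Combining this with the isomorphism from Proposition \ref{boundarystab} produces
$$ TM \oplus (M \times \mathbb{R}) \;\cong\; TW|_{M} \;\cong\; M \times \mathbb{R}^{n},$$
and the right-hand side is a trivial bundle over $M$. By the definition of stable parallelizability, this is precisely the statement that $M = \partial W$ is stably parallelizable, completing the proof.

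There is no real obstacle here; the entire content is packaged inside Proposition \ref{boundarystab}, and the corollary simply records what happens when the ambient bundle on the left-hand side is assumed to be trivial. The only point requiring a brief justification is the (standard) fact that $TW|_{M}$ inherits triviality from $TW$, which follows by restricting the global frame of $W$ to $M$.
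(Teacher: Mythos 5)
Your argument is correct and is exactly the intended derivation in the paper: restrict the global frame of the trivial bundle $TW$ to $M=\partial W$ to get $TW|_{M}\cong M\times\mathbb{R}^{n}$, then combine with Proposition \ref{boundarystab} to see that $TM\oplus(M\times\mathbb{R})$ is trivial, which is the definition of $M$ being stably parallelizable. Your remark about shifting the proposition's dimension convention ($n+1$ versus $n$) is the only bookkeeping point, and you handled it correctly.
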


G. E. Bredon and A. Kosinski's theorem in \cite{BK} provided below implies that stably parallelizable implies parallelizable for dimension 3.

\begin{theorem} \cite{BK} \label{snstablyparallel}
Let $M$ be an $n$-dimensional oriented closed stably parallelizable manifold, then $span(M) \geq span(S^n)$.
\end{theorem}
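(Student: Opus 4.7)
The plan is to exploit the stably parallelizable hypothesis to produce a \emph{Gauss map} $f\colon M \to S^n$ pulling back $TS^n$ to $TM$, and then to pull back a maximal family of linearly independent vector fields on $S^n$ to obtain the same number on $M$.

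First I would fix a Euclidean metric on the rank $(n+1)$ bundle $TM \oplus (M \times \mathbb{R})$ and choose a bundle isomorphism $\varphi \colon TM \oplus (M \times \mathbb{R}) \xrightarrow{\cong} M \times \mathbb{R}^{n+1}$ provided by stable parallelizability. The unit section $s\colon M \to TM \oplus (M \times \mathbb{R})$ defined by $s(p) = (0_p, 1)$ lands in the trivial summand and has fiberwise norm $1$. Its image under $\varphi$ has the form $p \mapsto (p, f(p))$ for some map $f\colon M \to \mathbb{R}^{n+1}$ with $\|f(p)\| = 1$, so I obtain $f\colon M \to S^n$.

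Next I would verify that $TM \cong f^{*}TS^n$. For $\vec{y} \in S^n$, $T_{\vec{y}}S^n$ is the orthogonal complement of $\vec{y}$ in $\mathbb{R}^{n+1}$, so $(f^{*}TS^n)_p$ is the orthogonal complement of $f(p)$. On the other hand, under $\varphi$ the fiber $T_pM$ is sent to the orthogonal complement of $\varphi(s(p)) = (p, f(p))$ in $\{p\} \times \mathbb{R}^{n+1}$. The restriction of $\varphi$ therefore gives a fiberwise linear isomorphism $TM \to f^{*}TS^n$ covering the identity on $M$. Now let $k = span(S^n)$ and choose linearly independent vector fields $\nu_1, \ldots, \nu_k$ on $S^n$. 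Their pullbacks $f^{*}\nu_1, \ldots, f^{*}\nu_k$ are sections of $f^{*}TS^n \cong TM$ with $f^{*}\nu_i(p) = \nu_i(f(p))$; since the $\nu_i$ are pointwise linearly independent on $S^n$, so are the $f^{*}\nu_i$ on $M$. Hence $span(M) \geq k = span(S^n)$.

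The main obstacle I expect is the Gauss map step: one has to be careful that the construction of $f$ actually yields the isomorphism $TM \cong f^{*}TS^n$ rather than merely a map $M \to S^n$. Choosing the fiberwise metric so that $\varphi$ is an orthogonal trivialization, and writing out the orthogonal complement of the distinguished unit section explicitly, is what makes this identification work. The rest of the argument (pullback of sections preserving pointwise linear independence) is formal. Note that orientability and closedness of $M$ play no role in this direction of the inequality; they are presumably needed in the companion results of Bredon–Kosinski that compare $span(M)$ with $span(S^n)$ in the other direction.
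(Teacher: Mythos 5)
Your proof is correct; note that the paper itself offers no proof of Theorem \ref{snstablyparallel} to compare against --- it is quoted from Bredon--Kosinski \cite{BK} --- so your argument should be judged as an independent one, and it holds up. After adjusting the trivialization $\varphi$ of $TM\oplus(M\times\mathbb{R})$ to be a fiberwise isometry (polar decomposition or Gram--Schmidt, with the direct-sum metric on the source and the standard metric on $M\times\mathbb{R}^{n+1}$), the image of the unit section $s$ of the trivial summand defines $f\colon M\to S^n$, and $\varphi$ carries $T_pM=s(p)^{\perp}$ isomorphically onto $f(p)^{\perp}=T_{f(p)}S^n$, giving $TM\cong f^{*}TS^n$; pulling back $span(S^n)$ pointwise independent fields then yields the inequality. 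Even without arranging the isometry, the orthogonal complement of $s$ (in whatever metric) is a complement of the trivial line subbundle spanned by $s$, hence isomorphic to $TM$ as both are isomorphic to the quotient by that line bundle, so the step you single out as the main obstacle is robust. Your construction of $f$ directly from the stable trivialization is more elementary than the way this map is usually produced in the literature, namely as the Gauss map of an immersion of $M$ into $\mathbb{R}^{n+1}$ supplied by Hirsch's immersion theorem, which is the setting of Bredon and Kosinski's paper; their extra input (closedness, connectedness) is what powers the sharper dichotomy recorded here as Theorem \ref{stabparallel}, whereas, as you correctly observe, the bare inequality $span(M)\geq span(S^n)$ needs neither orientability nor closedness. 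The only cosmetic points worth adding are that $f$ is smooth because $\varphi$ may be taken smooth, and that the pulled-back fields are transported to genuine vector fields on $M$ through the bundle isomorphism $f^{*}TS^n\cong TM$, which preserves pointwise linear independence.
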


\section{A short proof of Stiefel's theorem}\label{shortproof}
We now present the short proof of Stiefel's theorem by using a special case of even surgeries.

\begin{theorem}\cite{Sti}\label{3manparallel}
Every orientable $3$-manifold is parallelizable. 
\end{theorem}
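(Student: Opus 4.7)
The plan is to split into two cases: the closed case, which is the heart of the theorem and uses the surgery/parallelizability results assembled in Sections~\ref{section:integralsurgery} and~\ref{section:parallelizable}, and the bounded case, which will be handled by a short doubling argument.

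For the closed case, I would start with an arbitrary closed orientable $3$-manifold $M$. By Theorem~\ref{evensurgerythrm} I can write $M = M_{\pmb L}$ for some even framed link $\pmb L \subset S^3$, and by Theorem~\ref{evenlinkthrm} the corresponding $4$-manifold $W_{\pmb L}$ is parallelizable and has $\partial W_{\pmb L}= M$. Applying Corollary~\ref{boundarystable} to $W_{\pmb L}$ then shows that $M$ is stably parallelizable. To upgrade ``stably parallelizable'' to ``parallelizable'' I invoke Theorem~\ref{snstablyparallel}: $\mathrm{span}(M)\geq \mathrm{span}(S^3)$. Since $S^3$ is a Lie group (the unit quaternions) it is parallelizable, i.e.\ $\mathrm{span}(S^3)=3$, so $\mathrm{span}(M)=3$ and $TM$ is trivial.

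For the bounded case, let $M$ be a compact orientable $3$-manifold with $\partial M\neq \varnothing$. I would form the double $DM := M \cup_{\partial M} M$, which is a closed orientable $3$-manifold (orientations on the two copies chosen to agree along the boundary). The closed case, applied to $DM$, gives that $T(DM)$ is trivial. Since $M$ sits inside $DM$ as a codimension-$0$ submanifold, the tangent bundle $TM$ coincides with the restriction $T(DM)\vert_{M}$, and the restriction of a trivial bundle is trivial. Hence $TM$ is trivial and $M$ is parallelizable. This is the variant of the Fomenko--Matveev doubling argument announced in the introduction.

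The main conceptual obstacle is entirely in the closed case, and more specifically in the final implication ``stably parallelizable $\Rightarrow$ parallelizable''. The rest is an assembly of the surgery theorems (Lickorish--Wallace, Thom, Kirby, Milnor) together with Proposition~\ref{boundarystab}. I expect the write-up to be essentially a chain of citations; the only delicate point worth emphasizing is that the leap from $\mathrm{span}(M)\geq \mathrm{span}(S^3)$ to parallelizability relies on the special, low-dimensional fact that $S^3$ itself is parallelizable, which is exactly where the argument for dimension~$3$ succeeds while the analogous strategy would fail in most other dimensions (as anticipated by the remark about $7$-manifolds in Section~\ref{7manifolds}).
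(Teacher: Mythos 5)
Your proposal is correct and follows essentially the same route as the paper: the closed case is the same chain (Theorem~\ref{evensurgerythrm}, Theorem~\ref{evenlinkthrm}, Corollary~\ref{boundarystable}, then Theorem~\ref{snstablyparallel} with $\mathrm{span}(S^3)=3$), and your bounded case via the double with $TM \cong T(DM)\vert_M$ is exactly the paper's rank-zero normal bundle argument phrased slightly differently.
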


\begin{proof} Suppose $M$ is a closed orientable $3$-manifolds, then we follow A. T. Fomenko and S. V. Matveev proof in \cite{FoMa} to prove that $M$ is stably parallelizable. By Theorem \ref{evensurgerythrm}, $M$ can be obtained from $S^3$ by integral surgery along an even link. Furthermore, by Theorem \ref{evenlinkthrm} a closed orientable $3$-manifold obtained from $S^3$ by integral surgery along an even link is the boundary of a parallelizable $4$-manifold. Both theorems combined implies that $M$ is the boundary of a parallelizable $4$-manifold. By Corollary \ref{boundarystable}, $M$ is stably parallelizable. Since $S^3$ is parallelizable, then $span(S^3)=3$. By Theorem \ref{snstablyparallel}, $span(M) = 3$, therefore, $M$ is parallelizable. Suppose $M$ is a compact orientable $3$-manifold with non-trivial boundary. Let $X$ be the double of $M$, then $M$ is an embedding of $X$. Since $X$ is a closed orientable $3$-manifold then it is parallelizable. Since the normal bundle of $M$ into $X$, $NM$, is a 0-ranked vector bundle over $M$ (the rank of a vector bundle is the dimension of its fiber.) and $TX \vert_{M} \cong TM \oplus NM$, then the tangent bundle of $M$, $TM$, is trivial and hence $M$ is parallelizable. 
\end{proof}

\section{A remark on $7$-manifolds}\label{7manifolds}

The short proof of Stiefel's theorem raise the following questions: in which dimensions does stably parallelizable imply parallelizable? Is this only true if all orientable manifolds of the specified dimension are already parallelizable? The first question was answered by M. A. Kervaire in \cite{Ker1, Ker2}. One may also find the answer in two famous theorems. The first is on the parallelizability of $n$-spheres by R. Bott and J. Milnor in \cite{BM} and separately by M. A. Kervaire in \cite{Ker}.

\begin{theorem} \cite{BM, Ker}\label{Snparallel}
The sphere $S^n$ is parallelizable if and only if $n = 1, 3,$ or $7$. 
\end{theorem}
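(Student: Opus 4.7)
The theorem has two directions; the plan is to handle them separately, with only the second requiring substantial input. For sufficiency, I would construct explicit global trivializations of $TS^n$ for $n = 1, 3, 7$ using the three normed division algebras over $\mathbb{R}$ of dimensions $n+1 = 2, 4, 8$, namely the complex numbers $\mathbb{C}$, the quaternions $\mathbb{H}$, and the octonions $\mathbb{O}$. Identifying $S^n$ with the unit sphere in the respective algebra, I would pick any basis $v_1, \dots, v_n$ of the tangent space at the unit element $1$ and set $\nu_i(p) := p \cdot v_i$, where $\cdot$ denotes left multiplication in the algebra. Because each algebra is normed, left multiplication by a unit element is a linear isometry of $\mathbb{R}^{n+1}$ that preserves the sphere, so the $\nu_i$ are tangent to $S^n$ and remain pointwise linearly independent, providing $n$ linearly independent vector fields on $S^n$.

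For necessity, I would follow the classical route that reduces parallelizability of $S^n$ to the Hopf invariant one problem. Assuming $S^n$ admits a global frame $\{\nu_1, \dots, \nu_n\}$, the first step is to use such a frame to build a bilinear multiplication $\mu \colon \mathbb{R}^{n+1} \times \mathbb{R}^{n+1} \to \mathbb{R}^{n+1}$ without zero divisors, by combining the frame with the pointwise decomposition $\mathbb{R}^{n+1} = T_p S^n \oplus \mathbb{R} \cdot p$ and normalizing. The Hopf construction applied to $\mu$ then yields a map $h \colon S^{2n+1} \to S^{n+1}$, and a direct computation shows that the absence of zero divisors forces the Hopf invariant $H(h) = \pm 1$.

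The main obstacle — and the deepest ingredient — is then to conclude that a map $S^{2n+1} \to S^{n+1}$ of Hopf invariant one can exist only when $n+1 \in \{2, 4, 8\}$, equivalently $n \in \{1, 3, 7\}$. This is Adams' celebrated theorem on the Hopf invariant one problem, originally proved via secondary cohomology operations and later reproved by Adams and Atiyah using the Adams operations $\psi^k$ in complex $K$-theory. I would cite this result rather than reproduce its proof, as the argument lies well outside the scope of a short note. Combining the reduction in the previous paragraph with Adams' theorem completes the necessity direction and hence the full theorem.
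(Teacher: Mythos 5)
The paper does not prove this statement at all: it is quoted as a known result with a citation to Bott--Milnor and Kervaire, so there is no internal proof to compare yours against. It is worth noting that the cited original arguments deduce non-parallelizability for $n \neq 1,3,7$ from Bott's computations of the homotopy of the classical groups, whereas your necessity direction runs through the Hopf construction and Adams' Hopf invariant one theorem (or its $K$-theoretic proof by Adams--Atiyah), a later and logically different deep input. Both routes are legitimate; yours is the standard modern textbook reduction, but it outsources the hard step to a theorem that is, if anything, heavier than what this note needs, since the note only ever uses the ``if'' direction (parallelizability of $S^3$, and the statement for $S^7$ in Section 5) together with the span theorems of Thomas and Bredon--Kosinski. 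Your sufficiency argument via the normed algebras $\mathbb{C}$, $\mathbb{H}$, $\mathbb{O}$ is correct: left multiplication by a unit element is a linear isometry, so it carries a basis of the tangent space at $1$ to $n$ pointwise independent tangent fields.

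One step in your necessity direction is misstated and would fail as written: a global frame on $S^n$ does \emph{not} yield a \emph{bilinear} multiplication on $\mathbb{R}^{n+1}$ without zero divisors. A bilinear nonsingular multiplication is precisely a division algebra structure, which is a strictly stronger conclusion than parallelizability and cannot be extracted by ``combining the frame with the decomposition $\mathbb{R}^{n+1} = T_pS^n \oplus \mathbb{R}\cdot p$ and normalizing''; that recipe produces a map which is linear in one variable only and merely continuous in the other. The standard repair: orthonormalize the frame together with the outward normal to obtain a section $s\colon S^n \to O(n+1)$ with $s(p)e_{n+1} = p$, and set $\mu(p,q) = s(p)q$. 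This $\mu\colon S^n \times S^n \to S^n$ has bidegree $(\pm 1, \pm 1)$ (degree $\pm 1$ in $q$ since $s(p)$ is an isometry, degree $1$ in $p$ at $q = e_{n+1}$, and the degrees are independent of the slice by connectedness), and the Hopf construction applied to a map of bidegree $(a,b)$ has Hopf invariant $\pm ab$, hence $\pm 1$ here. With that correction the appeal to Adams' theorem finishes the argument as you describe.
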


The second theorem is on the classification of stably parallelizable manifolds by E. Thomas in \cite{Th1} and separately by G. E. Bredon and A. Kosinski in \cite{BK}. 

\begin{theorem} \cite{Th1, BK} \label{stabparallel}
Let $M$ be a stably parallelizable $n$-manifold, then either $M$ is parallelizable or $span(M) = span(S^n)$.
\end{theorem}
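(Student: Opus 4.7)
The plan is to recast the vector-field problem on a stably parallelizable $n$-manifold as an obstruction-theoretic lifting problem and then compare it with the classical case of $S^n$. Since stable parallelizability implies orientability, I may assume $M$ is closed and oriented. Fix a bundle isomorphism $\Phi: TM \oplus (M \times \mathbb{R}) \to M \times \mathbb{R}^{n+1}$, which realizes $TM$ as a rank-$n$ subbundle of the trivial bundle $M \times \mathbb{R}^{n+1}$. Sending $p \in M$ to the oriented unit normal to $\Phi(T_pM) \subset \mathbb{R}^{n+1}$ produces a Gauss map $\gamma_M: M \to S^n$, with the property that $TM$ is trivial (equivalently $span(M) = n$) if and only if $\gamma_M$ is null-homotopic. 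By the Hopf degree theorem, $[M, S^n] \cong \mathbb{Z}$, so $\gamma_M$ is classified up to homotopy by its degree $d \in \mathbb{Z}$. One checks that the standard stable parallelization of $S^n$ gives $\gamma_{S^n} = \mathrm{id}_{S^n}$, so $d = 1$ in this case.

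Next I would reformulate $span(M) \geq k$ as a lifting problem through a Stiefel-manifold fibration over $S^n$. A collection of $k$ pointwise linearly independent vector fields on $M$ corresponds, via $\Phi$, to a pointwise choice of $k$ vectors in $\mathbb{R}^{n+1}$ perpendicular to $\gamma_M$, i.e., a lift of $\gamma_M$ through the forgetful map $V_{n+1, k+1} \to V_{n+1, 1} = S^n$ whose fiber is $V_{n, k}$. By Theorem \ref{snstablyparallel} we already have $span(M) \geq s$ with $s := span(S^n)$, so the remaining task is to exclude the strictly intermediate values $s < span(M) < n$. Setting up obstruction theory for the next lift beyond the $s$-frame level, let $\omega \in \pi_{n-1}(F)$ denote the nonzero primary obstruction to promoting $\mathrm{id}_{S^n}$'s lift by one more step, where $F$ is the relevant Stiefel fiber.

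The crux is then to show that the corresponding primary obstruction for $\gamma_M$ is $d \cdot \omega$, and that this can vanish only when $d = 0$. By naturality of obstruction classes, the primary obstruction for $\gamma_M$ equals $\gamma_M^* \omega$ in $H^n(M; \pi_{n-1}(F)) \cong \pi_{n-1}(F)$; since $\gamma_M$ has degree $d$, the pullback $\gamma_M^* \omega$ is exactly $d \cdot \omega$. Granting the key arithmetic input that $\omega$ has infinite order in $\pi_{n-1}(F)$, we conclude: if $span(M) > s$ then $d \cdot \omega = 0$ forces $d = 0$, hence $\gamma_M$ is null-homotopic, hence $TM$ is trivial and $M$ is parallelizable; if instead $d \neq 0$ then $d \cdot \omega \neq 0$, and $span(M) = s = span(S^n)$. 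Higher obstructions are not a problem on an $n$-manifold: either they lie in cohomological degrees above $n$ and vanish for dimension reasons, or they are handled by the same degree-multiplication argument applied recursively. The principal obstacle is precisely the infinite-order claim on $\omega$, which is the deep arithmetic content extracted from the classical vector-fields-on-spheres problem (ultimately resting on Theorem \ref{Snparallel} together with detailed calculations of homotopy groups of Stiefel manifolds) and which is the heart of the analyses in \cite{Th1} and \cite{BK}.
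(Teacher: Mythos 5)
The paper itself offers no proof of Theorem \ref{stabparallel}; it is quoted from \cite{Th1, BK}, so your proposal has to be judged against those arguments. Your general framing (a Gauss-type map $\gamma_M\colon M\to S^n$ coming from the stable trivialization, the identification $TM\cong\gamma_M^*TS^n$, and the translation of $span(M)\geq k$ into lifting $\gamma_M$ through $V_{n+1,k+1}\to S^n$) is sound and close in spirit to Bredon--Kosinski, who use an immersion $M\looparrowright\mathbb{R}^{n+1}$ and its Gauss map. But the two steps that carry the weight of the dichotomy are not correct as stated. First, the ``key arithmetic input'' that $\omega$ has infinite order in $\pi_{n-1}(V_{n,s+1})$ is false in exactly the cases where the theorem has content. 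For even $n$ one has $s=0$, $F=S^{n-1}$, $\omega=2\in\mathbb{Z}$, and your argument works --- but there the statement is just the elementary Euler-characteristic case. For odd $n\neq 1,3,7$ one has $s\geq 1$, so $F=V_{n,s+1}$ with $s+1\geq 2$, and $\pi_{n-1}(V_{n,s+1})$ is a \emph{finite} group (e.g.\ for $n\equiv 1 \bmod 4$, $n\geq 5$, the obstruction to two fields on $S^n$ lies in $\pi_{n-1}(V_{n,2})\cong\mathbb{Z}/2$). Hence $d\cdot\omega=0$ never forces $d=0$, and your chain ``lift exists $\Rightarrow d\omega=0\Rightarrow d=0\Rightarrow TM$ trivial'' collapses precisely for odd $n\neq 1,3,7$. (A repair is conceivable: since $TM\cong\gamma_M^*TS^n$, its clutching element is $d\cdot\tau_n$ with $\tau_n\in\pi_{n-1}(SO(n))$ of order $2$ for such $n$, so you only need ``$d\omega=0\Rightarrow d$ even''; but establishing that compatibility of orders is genuine work and is essentially what \cite{Th1, BK} supply, resting on Theorem \ref{Snparallel} and Stiefel-manifold computations --- it is not a formality you can grant.)

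Second, the naturality step is not legitimate. When $s\geq 1$ the class in $H^{n}(M;\pi_{n-1}(V_{n,s+1}))$ obstructing the last extension is \emph{not} the primary obstruction (the fiber is $(n-s-2)$-connected, so the primary obstruction lives in degree $n-s$); it is a higher obstruction, defined only relative to a chosen lift over the $(n-1)$-skeleton and subject to indeterminacy when that lift is changed. Over $S^n$ (two cells) the class is well defined, but its pullback $d\cdot\omega$ is merely the obstruction attached to one particular partial lift of $\gamma_M$. From $span(M)\geq s+1$ you may only conclude that \emph{some} partial lift has vanishing top obstruction, not that $d\cdot\omega=0$; your closing remark that higher obstructions are ``handled by the same degree-multiplication argument applied recursively'' does not address this. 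Controlling exactly this indeterminacy (via the structure of the clutching element $d\tau_n$, or Thomas's comparison of spans of stably equivalent bundles) is the heart of the cited proofs. A minor further slip: the asserted equivalence ``$TM$ trivial iff $\gamma_M$ null-homotopic'' fails in the ``only if'' direction (the identity Gauss map of $S^3$), though you only use the harmless direction.
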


As a result of Theorems \ref{Snparallel} and \ref{stabparallel}, we see that all stably parallelizable $n$-manifolds are parallelizable if and only if $n = 1, 3, 7$. Since all orientable $1$- and $3$-manifolds are parallelizable, then $7$-manifolds are the only manifolds with the property that stably parallelizable implies parallelizable where not all orientable manifolds of this dimension are parallelizable. An example of a $7$-manifold that is not parallelizable can be found by looking at Dold manifolds.

A. Dold in \cite{Dol} introduced a family of manifolds, now known as the Dold manifolds, in his work on odd-dimensional generators for the unoriented cobordism ring.

\begin{definition}
Let $r \geq 0, s \geq 0,$ and $r+s > 0$, then the Dold manifold, denoted by $P(r, s)$, is an $r+2s$-dimensional smooth connected manifold. It is constructed from $S^r \times \mathbb{C}P^s$ by identifying $(x, y) \in S^r \times \mathbb{C}P^s$ with $(-x, \overline{y})$.
\end{definition}

The family of Dold manifolds consist of orientable and non-orientable manifolds. This can be shown by letting $s=0$. Since $P(r, 0) \cong \mathbb{R}P^r$, then $P(r, 0)$ for even $r$ are examples of non-orientable Dold manifolds. H. K. Mukerjee, in his work on classifying the homotopy of Dold manifolds \cite{Muk}, gave a simple condition on $r$ and $s$ to determine the orientability of a Dold manifold.

\begin{theorem}\cite{Muk}\label{OrientabilityDold}
The Dold manifold $P(r, s)$ is orientable if $r+s+1$ is even and is unorientable if $r+s+1$ is odd.
\end{theorem}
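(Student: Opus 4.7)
The plan is to realize $P(r,s)$ as the quotient of $X := S^r \times \mathbb{C}P^s$ by the free $\mathbb{Z}/2$-involution $\sigma(x, y) = (-x, \overline{y})$ and to determine orientability by computing the sign of $\sigma$ on the product orientation. The action is free because the antipodal map $A$ on $S^r$ has no fixed points, so $q\colon X \to P(r,s)$ is a smooth double cover. Assuming $r \geq 1$, $X$ is a connected orientable manifold (as a product of orientable factors), and by a standard covering-space argument $P(r,s)$ is orientable if and only if $\sigma$ preserves the orientation of $X$: if it does, the orientation descends to the quotient; conversely, an orientation downstairs pulls back to a $\sigma$-invariant one upstairs.

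Since the involution splits as $\sigma = A \times C$, with $A(x) = -x$ on $S^r$ and $C(y) = \overline{y}$ on $\mathbb{C}P^s$, its orientation sign is the product of the two factor signs. For $A$, it is classical that the antipodal map on $S^r$ has degree $(-1)^{r+1}$, so $A$ is orientation-preserving if and only if $r$ is odd. For $C$, I would work in the affine chart $U_0 = \{[1 : z_1 : \cdots : z_s]\} \cong \mathbb{C}^s$ of $\mathbb{C}P^s$; conjugation acts as $(z_1, \ldots, z_s) \mapsto (\overline{z_1}, \ldots, \overline{z_s})$, which in real coordinates $z_j = x_j + iy_j$ becomes $s$ independent reflections $(x_j, y_j) \mapsto (x_j, -y_j)$ with total Jacobian determinant $(-1)^s$. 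This sign is chart-independent because the standard transition functions of $\mathbb{C}P^s$ are holomorphic, hence orientation-preserving as real maps, so $C$ is orientation-preserving if and only if $s$ is even.

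Combining the two signs yields $(-1)^{r+1}\cdot(-1)^s = (-1)^{r+s+1}$, which equals $+1$ exactly when $r+s+1$ is even. By the covering-space criterion this is equivalent to the orientability of $P(r,s)$, proving the theorem. The main obstacle is justifying the conjugation computation as a global invariant rather than a chart-dependent artifact; this is the role of the holomorphic-transition observation. A quick sanity check against the familiar example $P(r, 0) = \mathbb{R}P^r$, which is orientable iff $r$ is odd, iff $r + 0 + 1$ is even, confirms the formula.
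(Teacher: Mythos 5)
Your argument is correct, and it is worth noting that the paper itself offers no proof of this statement --- it is quoted from Mukerjee \cite{Muk}, where (as in Dold's original work) orientability is read off from characteristic classes: one computes $H^*(P(r,s);\mathbb{Z}/2)\cong \mathbb{Z}/2[c,d]/(c^{r+1},d^{s+1})$ and the total Stiefel--Whitney class $w(P(r,s))=(1+c)^r(1+c+d)^{s+1}$, so that $w_1=(r+s+1)c$ and orientability is equivalent to $r+s+1$ being even. Your route is genuinely different and more elementary: you use only the covering-space criterion (a quotient of a connected orientable manifold by a free involution is orientable iff the deck involution preserves orientation) together with two classical sign computations, the degree $(-1)^{r+1}$ of the antipodal map on $S^r$ and the sign $(-1)^s$ of conjugation on $\mathbb{C}P^s$, the latter legitimately globalized from the affine chart because the holomorphic transition maps are orientation-preserving and $\mathbb{C}P^s$ is connected. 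What the characteristic-class approach buys is the full Stiefel--Whitney class, which is what feeds into the span and parallelizability results of Korba\v{s} used later in the paper; what your approach buys is a short, self-contained proof of exactly the orientability statement. One small caveat: your connectedness hypothesis $r\geq 1$ is genuinely needed, and in fact the boundary case $r=0$ shows the statement must be read with that convention, since $P(0,s)\cong \mathbb{C}P^s$ is always orientable (consistently, $w_1=(s+1)c$ vanishes there because $c=0$ in $H^1$), so you should flag $r\geq 1$ as an assumption rather than leave it implicit.
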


J. Korba\v{s}, in his work on the parallelizability and span of the Dold manifolds \cite{Kor}, proved that only six Dold manifolds are stably parallelizable; three of which are parallelizable.  

\begin{theorem}\cite{Kor}\label{stablyparallelDold}
The Dold manifold $P(r, s)$ is stably parallelizable if and only if
$$(r, s) \in \{ (1, 0), (3, 0), (7, 0), (0, 1), (2, 1), (6, 1) \}. $$

Furthermore, only $P(1, 0), P(3, 0),$ and $P(7, 0)$ are parallelizable.
\end{theorem}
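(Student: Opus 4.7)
The plan is to reduce everything to a Stiefel--Whitney class computation. The space $P(r,s)$ fibers as $\mathbb{C}P^s \hookrightarrow P(r,s) \xrightarrow{\pi} \mathbb{R}P^r$ via $[x,y] \mapsto [x]$, giving a splitting $TP(r,s) \cong T_{\mathrm{vert}} \oplus \pi^* T\mathbb{R}P^r$. Combining this with Dold's original computation of $H^*(P(r,s);\mathbb{Z}/2) \cong \mathbb{Z}/2[c,d]/(c^{r+1}, q(c,d))$ with $|c|=1$, $|d|=2$, one obtains a closed formula for the total Stiefel--Whitney class $w(P(r,s))$ as a polynomial in $c$ and $d$. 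Since stable parallelizability forces $w(P(r,s)) = 1$, the question becomes a purely combinatorial one about binomial coefficients mod $2$.

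For the necessary direction I would expand $w(P(r,s)) - 1 = 0$ degree by degree, reading off vanishing conditions on binomial coefficients $\binom{r+1}{k}$ and $\binom{s+1}{k}$ for various $k$. A systematic use of Lucas' theorem, which characterises these mod-$2$ binomials via the base-$2$ digit expansions of $r+1$ and $s+1$, should narrow the candidates to the six pairs in the statement. For the sufficient direction, the three cases $P(r,0) = \mathbb{R}P^r$ with $r \in \{1,3,7\}$ are handled below. For $(0,1),(2,1),(6,1)$ I would verify directly that all Stiefel--Whitney classes vanish using the formula, and then either invoke Theorem \ref{stabparallel} in combination with low-dimensional obstruction theory, or exhibit explicit stable framings coming from the Hopf-type structures associated with $\mathbb{C}, \mathbb{H}, \mathbb{O}$.

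For the parallelizability refinement, $P(r,0) = \mathbb{R}P^r$ is parallelizable precisely when $r \in \{1,3,7\}$, a classical consequence of Theorem~\ref{Snparallel} together with the descent of an antipodal-equivariant framing from $S^r$. In the three remaining stably parallelizable cases, $P(0,1) \cong \mathbb{C}P^1 = S^2$ has Euler characteristic $2 \neq 0$ and so is not parallelizable; for $P(2,1)$ and $P(6,1)$ I would isolate a surviving monomial to show that the top Stiefel--Whitney class (equivalently the mod-$2$ Euler class) is nonzero, ruling out parallelizability.

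The main obstacle is the combinatorial step pinning down the six pairs. The expansion of $w(P(r,s)) = 1$ in the ring $H^*(P(r,s);\mathbb{Z}/2)$ looks innocent, but after imposing the defining relations and the interactions between $\binom{r+1}{k}$ and $\binom{s+1}{k}$ across different cohomological degrees, ensuring that some binomial coefficient is odd for every $(r,s)$ outside the listed set requires a careful case analysis on the $2$-adic expansions of $r+1$ and $s+1$. This is the combinatorial heart of Korba\v{s}' original argument.
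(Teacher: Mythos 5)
First, note that the paper does not prove Theorem \ref{stablyparallelDold}; it is quoted from Korba\v{s} \cite{Kor}, so your proposal has to be judged against his actual argument, which is not a Stiefel--Whitney class computation. And this is where your plan has a genuine gap: the reduction ``stable parallelizability forces $w(P(r,s))=1$, so the problem is purely combinatorial about binomial coefficients mod $2$'' only gives a necessary condition, and that condition does \emph{not} cut the candidates down to the six listed pairs. Dold's formula is $w(P(r,s))=(1+c)^r(1+c+d)^{s+1}$ in $\mathbb{Z}/2[c,d]/(c^{r+1},d^{s+1})$, and it equals $1$ for infinitely many pairs outside the list. For example $P(15,0)=\mathbb{R}P^{15}$ has $w=(1+c)^{16}=1$, yet it is not stably parallelizable: stably $T\mathbb{R}P^{15}\oplus\varepsilon\cong 16\xi$, and by Adams' computation the order of $\xi-1$ in $\widetilde{KO}(\mathbb{R}P^{15})$ is $2^7=128$, which does not divide $16$. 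Similarly $w(P(14,1))=(1+c)^{14}(1+c+d)^2=(1+c)^{16}=1$ since $d^2=0$ and $c^{15}=0$, but $(14,1)$ is not in the list. So no amount of Lucas-theorem bookkeeping on $w$ can establish the ``only if'' direction; the necessity argument genuinely requires $KO$-theoretic information (Korba\v{s} works with the known stable splitting of $TP(r,s)$ into copies of the canonical real line bundle $\xi$ and a twisted complex line bundle, and uses the known orders of these classes in reduced $KO$-theory, i.e.\ Adams-type results). The ``if'' direction has the same defect: vanishing of Stiefel--Whitney classes does not imply stable parallelizability, and in particular for the $8$-dimensional case $P(6,1)$ you would still face integral obstructions in $\pi_3(SO)\cong\mathbb{Z}$ and $\pi_7(SO)\cong\mathbb{Z}$ (Pontryagin-type classes); also Theorem \ref{stabparallel} cannot help here, since it \emph{assumes} stable parallelizability rather than detecting it.

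A second, smaller but real, error is in your treatment of the parallelizability refinement for $P(2,1)$ and $P(6,1)$: you propose to ``isolate a surviving monomial'' in the top Stiefel--Whitney class, but these manifolds are stably parallelizable, so \emph{all} their Stiefel--Whitney classes vanish, including the top one (the mod-$2$ Euler class); no monomial can survive. The correct elementary obstruction is the integral Euler class: $\chi(P(2,1))=\chi(P(6,1))=\tfrac12\,\chi(S^r\times\mathbb{C}P^1)=2\neq 0$ for $r=2,6$, exactly as in your $S^2$ argument, and a nonzero Euler characteristic rules out a nowhere-zero vector field and hence parallelizability. The parts of your sketch that do work are the fibration $\mathbb{C}P^s\hookrightarrow P(r,s)\to\mathbb{R}P^r$, the identification $P(r,0)=\mathbb{R}P^r$ with parallelizability exactly for $r\in\{1,3,7\}$ (via Theorem \ref{Snparallel} and descent of an equivariant framing), and the $\chi$-argument for $P(0,1)=S^2$; but the classification of the stably parallelizable cases, which is the heart of the theorem, is not reachable by characteristic classes alone.
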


There are four $7$-dimensional Dold manifolds, $P(1, 3), P(3, 2), P(5, 1),$ and $P(7, 0)$. By Theorem \ref{OrientabilityDold}, two of the four are orientable; $P(3,2)$ and $P(7, 0)$. Furthermore, by applying Theorem \ref{stablyparallelDold}, $P(3,2)$ is the only orientable $7$-dimensional Dold manifold that is not parallelizable.




\end{document}